\newcommand{\T}{\ensuremath{\mathcal{T}}}
\newcommand{\A}{\ensuremath{\mathcal{A}}}
\newcommand{\Alg}{\ensuremath{\mathbf{A}}}
\newcommand{\B}{\ensuremath{\mathcal{B}}}
\newcommand{\C}{\ensuremath{\mathcal{C}}}
\newcommand{\lam}{\ensuremath{\lambda}}
\newcommand{\abar}{\ensuremath{\overline{a}}}
\newcommand{\BICA}{\emph{Bulletin of the ICA}}
\newcommand{\DM}{\emph{Discrete Math.}}
\newcommand{\IPL}{\emph{Inform.\ Proc.\ Letters}}
\newcommand{\UM}{\emph{Utilitas Math.}}
\newcommand{\OA}{\ensuremath{\mathrm{OA}}}
\newtheorem{theorem}{Theorem}[section]
\newtheorem{corollary}[theorem]{Corollary}
\theoremstyle{definition}
\newtheorem{remark}[theorem]{Remark}
\title{A Unified Treatment of Some Classic Combinatorial Inequalities Using the Variance Method}
\author{Douglas R.\ Stinson%
\thanks{Research supported by an NSERC General Research Funds grant (University of Waterloo)}
\\David R.\ Cheriton School of Computer Science\\
University of Waterloo\\
Waterloo, Ontario N2L 3G1, Canada
}
\date{\today}
\begin{document}
\maketitle

\begin{abstract}
The ``variance method'' has been used to prove many classical inequalities in design theory and coding theory. The purpose of this expository note is to review and present some of these inequalities in a unified setting. I will also discuss some examples from my own research where I have employed these techniques.
\end{abstract}

\section{The Basic Idea}

The ``variance method'' is used to prove numerous inequalities in design theory and coding theory.
Essentially, we use the fact that the variance of any set of real numbers is non-negative. This underlying theme can be found in the proofs of many famous inequalities, beginning with Fisher's Inequality for BIBDs, which was proven in 1940. Most applications of the variance method begin from first principles, which is not unreasonable. However, it might be useful and instructive to discuss the technique in general. Therefore we derive some general inequalities from which many famous inequalities can be derived by ``plugging in'' suitable expressions depending on the application at hand.

Suppose $a_1, \dots , a_n$ are real numbers. 
For $j = 0,1,2, \dots$, let 
\begin{equation}
\label{Sj.eq}
 S_j = \sum _{i=1}^{n} {a_i}^j.
 \end{equation}
 Thus $S_0 = n$, $S_1$ is the sum of the $a_i$'s, etc.
 
Let $\abar$ denote the mean of the $a_i$'s.
Then
\begin{equation*}
\abar = \frac{S_1}{n} = \frac{S_1}{S_0}.
\end{equation*}
It is straightforward to see that
\begin{align*} 0 &\leq \sum_{i=1}^{n}  (a_i - \abar)^2 \\
&= S_2 - 2\abar S_1 + S_0 \abar^2\\
&= S_2 - 2\frac{{S_1}^2}{S_0} + \frac{{S_1}^2}{S_0}\\
&= S_2 - \frac{{S_1}^2}{S_0}.
\end{align*}
Hence, we have the following fundamental inequality.

\begin{theorem}
\label{ineq1}
 Suppose that $a_1, \dots , a_n$ are real numbers. Define $S_0, S_1$ and $S_2$
as in (\ref{Sj.eq}).
Then
\begin{equation}
\label{eq7}
S_0 S_2 - {S_1}^2 \geq 0.
\end{equation}
\end{theorem}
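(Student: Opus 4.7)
The plan is to run exactly the calculation already sketched in the preamble, but to present it as a self-contained proof and to double-check the edge case. The core observation is that the quantity $S_0 S_2 - S_1^2$ is, up to the factor $S_0 = n$, the sum of squared deviations of the $a_i$ from their mean, so non-negativity is automatic.

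First I would dispose of the trivial case $n = 0$: here $S_0 = S_1 = S_2 = 0$, so \eqref{eq7} reads $0 \geq 0$. For $n \geq 1$, set $\abar = S_1 / S_0$, which is well defined since $S_0 = n > 0$. Then I would write down the non-negative real number
\begin{equation*}
T = \sum_{i=1}^{n} (a_i - \abar)^2 \geq 0,
\end{equation*}
the inequality holding because $T$ is a finite sum of squares of real numbers.

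Next I would expand $T$ and translate into the $S_j$ notation exactly as in the paragraph preceding the theorem: distributing the square gives $T = S_2 - 2\abar S_1 + S_0 \abar^2$, and substituting $\abar = S_1/S_0$ collapses the last two terms into $-S_1^2/S_0$. This yields
\begin{equation*}
0 \;\leq\; T \;=\; S_2 - \frac{S_1^2}{S_0}.
\end{equation*}
Multiplying through by the positive quantity $S_0$ preserves the inequality and gives $S_0 S_2 - S_1^2 \geq 0$, which is \eqref{eq7}.

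There is no real obstacle here; the only thing to be careful about is not dividing by $S_0$ when $n = 0$, which is why I split off that case at the start. It is worth flagging (though not strictly needed for the statement) that equality holds in \eqref{eq7} precisely when every $a_i$ equals $\abar$, i.e., when all the $a_i$ are equal, since a sum of squares of reals vanishes iff each summand does. This observation will presumably be useful later when the inequality is invoked to characterize extremal configurations.
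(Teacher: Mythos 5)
Your proof is correct and follows essentially the same route as the paper: expand the non-negative sum of squared deviations $\sum_i (a_i - \abar)^2$, rewrite it as $S_2 - S_1^2/S_0$, and clear the denominator. The only additions — handling $n=0$ separately and noting the equality condition — are harmless refinements of what the paper already does.
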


\begin{remark}
It is interesting to to note that this exact inequality can be found in \cite[p.\ 54]{Fisher}.
Also, it is obvious that equality occurs in (\ref{eq7}) if and only if 
$a_i = \abar$ for $1 \leq i \leq n$.
\end{remark}

\bigskip

In most of the applications we discuss, we  start with an equation for the value
\[S^* = \sum_{i=1}^{n}  \binom{a_i}{2}.\]
It is then a simple matter to compute
\begin{equation}
\label{Sstar} S_2 = 2 S^* + S_1\end{equation}
and then apply the inequality (\ref{eq7}).

\bigskip
 An alternate derivation of inequality (\ref{eq7}) is obtained by observing that it is just a special case of the Cauchy-Schwarz Inequality.  Here are a few details.
 The Cauchy-Schwarz Inequality for real numbers states that
\begin{equation}
\left( \sum _{i=1}^n u_iv_i \right)^2 \leq \left(  \sum _{i=1}^n {u_i}^2 \right) \left(  \sum _{i=1}^n {v_i}^2 \right).
\label{CS.eq}
\end{equation}
Let $u_1 = \cdots = u_n = 1$. Then

\begin{equation}
\left( \sum _{i=1}^n v_i \right)^2 \leq n \left(  \sum _{i=1}^n {v_i}^2 \right).
\label{CS2.eq}
\end{equation}
This is equivalent to (\ref{eq7}).

It is further known that equality occurs in (\ref{CS.eq}) if and only if $(v_1 , \dots , v_n) = c(u_1 , \dots , u_n)$ for some nonzero real number $c$. When $(u_1 , \dots , u_n) = (1, \dots , 1)$, we see that equality occurs in (\ref{CS2.eq}) 
if and only if 
$v_1 = \cdots = v_n$.

\bigskip

In some applications, we will encounter situations where we have a lower bound on $S_1$, rather than an equality.\footnote{We note that a similar situation was addressed in my paper \cite[Theorem 1]{St-J27}, but the proof contained an error.
Amusingly, a stronger result in the same paper had a correct proof. We also note that a correct argument was given in \cite[\S 8.1]{St-B8} in the setting of the Stanton-Kalbfleisch Bound.} 
Suppose that we have equations for $S_0$ and $S^*$, and 
suppose we know that that $S_1 \geq B$, where $B > 0$.  
We can therefore write $S_1 = B + \epsilon$ where  $\epsilon \geq 0$.
We also have $S_2 = 2S^* + B + \epsilon$. Write $C = 2S^* + B$; then $S_2 = C + \epsilon$.
Finally, suppose  that $a_i \geq 1$ for all $i$. Then $S^* \geq 0$ and hence $C = 2S^* + B \geq B > 0$.

From equation (\ref{eq7}), we have
\[ S_0 \geq \frac{(B + \epsilon)^2}{C + \epsilon}.
\] 
Consider $B$ and $C$ to be fixed and define
\[ f(\epsilon) = \frac{(B + \epsilon)^2}{C + \epsilon}.\]
We have 
\begin{align*} 
f'(\epsilon) &= \frac{2(C + \epsilon)(B + \epsilon) - (B + \epsilon)^2}{(C + \epsilon)^2}\\
&= \left( \frac{B + \epsilon}{(C + \epsilon)^2}\right) ( 2(C + \epsilon) - (B + \epsilon)) \\
&= \left( \frac{B + \epsilon}{(C + \epsilon)^2}\right) (2C - B + \epsilon).
\end{align*}
We noted above that  $C \geq B > 0$.
It follows that $2C - B + \epsilon \geq C + C-B > 0$, because $C > 0$ and $C-B \geq 0$. Therefore $f'(\epsilon) > 0$ for all $\epsilon \geq 0$.
Hence, we have that
 \[ S_0 \geq f(\epsilon) \geq f(0) = \frac{B^2}{C}.\]

Therefore we have proven the following.

\begin{theorem}
\label{thm1}
Suppose that $a_1, \dots , a_n$ are real numbers and $a_i \geq 1$ for all $i$. Define $S_0, S_1, S_2$ and $S^*$
as in (\ref{Sj.eq}) and (\ref{Sstar}).
Suppose further that 
$S_1 = B + \epsilon$ and  $C = 2S^* + B$, where $B > 0$ and  $\epsilon \geq 0$.
Then 
\begin{equation}
\label{eq7a}
S_0 C - B^2 \geq 0.\end{equation}
\end{theorem}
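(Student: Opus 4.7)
The plan is to apply the fundamental inequality (\ref{eq7}) of Theorem \ref{ineq1} and then eliminate the parameter $\epsilon$ by a monotonicity argument. From (\ref{Sstar}) I get $S_2 = 2S^* + S_1 = C + \epsilon$, so substituting $S_1 = B+\epsilon$ and $S_2 = C+\epsilon$ into (\ref{eq7}) yields
\[
S_0 \;\geq\; \frac{(B+\epsilon)^2}{C+\epsilon}.
\]
It then suffices to show that the right-hand side, viewed as a function of $\epsilon \geq 0$, attains its minimum at $\epsilon = 0$, because then $S_0 \geq B^2/C$ and multiplying by $C > 0$ gives (\ref{eq7a}).

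To establish this minimum, I would set $f(\epsilon) = (B+\epsilon)^2/(C+\epsilon)$ and differentiate via the quotient rule. After factoring $(B+\epsilon)/(C+\epsilon)^2$ from the numerator, the sign of $f'(\epsilon)$ reduces to the sign of $2C - B + \epsilon$, so the question becomes whether this quantity is positive on $[0,\infty)$.

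The main obstacle, and the place where the hypothesis $a_i \geq 1$ is essential, is the bound $C \geq B$. The hypothesis forces $\binom{a_i}{2} = a_i(a_i-1)/2 \geq 0$ for each $i$, hence $S^* \geq 0$ and $C = 2S^* + B \geq B > 0$. Combined with $\epsilon \geq 0$, this gives $2C - B + \epsilon \geq C > 0$, so $f'(\epsilon) > 0$ throughout $[0,\infty)$. Therefore $f$ is increasing, $f(\epsilon) \geq f(0) = B^2/C$, and the conclusion follows. Without the hypothesis $a_i \geq 1$ one cannot guarantee $S^* \geq 0$, and $f$ could in principle decrease, so replacing $\epsilon$ by $0$ would be invalid; this is why the positivity step is the delicate part of the argument rather than the calculus itself.
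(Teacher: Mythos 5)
Your proposal is correct and follows essentially the same route as the paper's own argument: substitute $S_1 = B+\epsilon$, $S_2 = C+\epsilon$ into (\ref{eq7}), show $f(\epsilon) = (B+\epsilon)^2/(C+\epsilon)$ is increasing on $[0,\infty)$ via the factorization of $f'$ and the bound $C \geq B > 0$ coming from $S^* \geq 0$, and conclude $S_0 \geq f(0) = B^2/C$. Your added remark correctly identifies the hypothesis $a_i \geq 1$ as the point that guarantees $S^* \geq 0$ and hence the monotonicity.
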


In the rest of this note, I derive several classical inequalities for codes and designs  by applying Theorems \ref{ineq1} and \ref{thm1}.  These include Fisher's Inequality for BIBDs (Section \ref{Fisher.sec}), the Plackett-Burman Bound for orthogonal arrays (Section \ref{PB.sec}), the Second Johnson Bound for constant-weight binary codes (Section \ref{John.sec}), the Stanton-Kalbfleisch Bound for pairwise balanced designs (Section \ref{SK.sec}), and the Mullin-Vanstone Bound for $(r, \lambda)$-designs (Section \ref{MV.sec}).  I also discuss a few applications and results from my own research where I have used these techniques, including the two-point sampling derandomization method (Section \ref{twopoint.sec}). Finally, an extension of the variance technique is briefly mentioned in Section \ref{exten.sec}.

\section{Fisher's Inequality}
\label{Fisher.sec}

Fisher's Inequality \cite{Fisher} states that $b \geq v$ (or, equivalently, $r \geq k$, which follows from equation (\ref{BIBD1.eq})) in a $(v,b,r,k,\lam)$-BIBD. Fisher's original proof from 1940 used the variance method. This is actually one of the more complicated ways to prove Fisher's Inequality, as simpler linear algebraic proofs are found much more commonly these days in textbooks (e.g., in \cite{St-B8} as well as many others). However, I believe that this was the first use of the variance method to prove an inequality in combinatorial designs. 

We now present Fisher's original derivation of the eponymous inequality.  A \emph{$(v,b,r,k,\lam)$-BIBD} (balanced incomplete block design) is a collection of $b$ \emph{blocks}, where each block is a subset of $k$ \emph{points} chosen from a $v$-set. 
Every point occurs in exactly $r$ blocks and every pair of points occurs in exactly $\lambda$ blocks.
If $(X, \A)$ is a $(v,b,r,k,\lam)$-BIBD, then the following identities hold:
\begin{align}
\label{BIBD1.eq} vr &= bk\\
\label{BIBD2.eq} \lam(v-1) &= r(k-1).
\end{align}

Denote $\A = \{A_i: 1 \leq i \leq b\}$. For $1 \leq i \leq b-1$, let
$a_i = | A_i \cap A_b|$. 
We have the following equations by simple counting:
\begin{align}
\label{S0.eq}S_0 &= b-1\\
\label{S1.eq}S_1 &= k(r-1)\\
\nonumber S^* &= \binom{k}{2}(\lam -1)\\
\label{S2.eq}S_2 &= k(k-1)(\lam-1) + k(r-1) \quad \text{from (\ref{Sstar})}. 
\end{align}


Now we substitute (\ref{S0.eq}),  (\ref{S1.eq}) and  (\ref{S2.eq}) into
 (\ref{eq7}):
\begin{equation}
\label{eq8}  (b-1)(k(k-1)(\lam -1) + k(r-1)) - k^2(r-1)^2 \geq 0.
\end{equation}

After a considerable amount of non-obvious simplification using the identities (\ref{BIBD1.eq}) and (\ref{BIBD2.eq}), the inequality (\ref{eq8}) can be rewritten as
\begin{equation}\label{eq20} (r-k)(v-k)( r - \lam )   \geq 0.
\end{equation}
For details, see Fisher's original paper \cite{Fisher}. 

\medskip

An ``incomplete block'' design has $k < v$. It then follows from (\ref{BIBD2.eq}) that $\lam < r$. Hence,
\[(v-k)(r - \lam) >0\] and therefore 
(\ref{eq20}) yields the simpler inequality $r - k \geq 0$. This completes the proof of Fisher's Inequality.

\medskip

We see further that, in a BIBD with $v = k$, every pair of blocks intersects in exactly $\abar = S_1 / S_0$ points. However,
if $v=b$, then $\lam (v-1) = k(k-1)$ (from \ref{BIBD2.eq}) and 
\[ \abar = \frac{k(r-1)}{b-1} = \frac{k(k-1)}{v-1} = \lam .\]
So every pair of blocks intersects in exactly $\lam$ points.

\bigskip

Mann's inequality \cite{Mann} (proven in 1969) is another inequality on the parameters of a BIBD. It states that $b \geq sv$ if there is a block that occurs with multiplicity $s$. The proof from \cite{Mann} also uses the variance method; it is very similar to the proof of Fisher's Inequality given in \cite{Fisher}.

\section{Plackett-Burman Bound}
\label{PB.sec}


Let $k \geq 2$, $n \geq 2$ and $\lam \geq 1$ be integers.
An \emph{orthogonal array} $\OA_{\lam} (k,n)$
is a $\lam  n^2 $ by $k$ array, $A$, with entries from a set
$X$ of cardinality $n$ such that, within any two columns of $A$,
every ordered pair of symbols from $X$ occurs in exactly $\lam$ rows.
There are  $\lam  n^2$ rows in the OA and every symbol occurs in exactly $\lam n$ rows within each column of $A$.

We use the variance method to give a simple combinatorial proof of the classical Plackett-Burman bound from 1946. 
(The Plackett-Burman bound is a special case of the better-known Rao bound \cite{Rao}, which was proven in 1947.) In comparison with the proof of Fisher's inequality given in Section \ref{Fisher.sec}, all the remaining proofs in this paper (more precisely, the simplifications of the main inequality) are very simple.

\begin{theorem}[Plackett-Burman Bound (\cite{PB}, p.\ 310)]
\label{PB.thm}
Let $k \geq 2$, $n \geq 2$ and $\lam \geq 1$ be integers. 
If there is an $\OA_{\lam}(k,n)$, then
\[\lam \geq \frac{k(n-1)+1}{n^2}.\]
\end{theorem}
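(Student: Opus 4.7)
The plan is to apply Theorem \ref{ineq1} after reducing the OA to a ``variance template'' indexed by rows. Concretely, I would fix an arbitrary row $r^*$ of the array, and for each of the remaining $\lam n^2 - 1$ rows I would let $a_i$ be the number of columns in which that row agrees with $r^*$. Then $S_0 = \lam n^2 - 1$, and the quantities $S_1$ and $S^*$ reduce to two elementary counts dictated by the defining balance properties of the OA.

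First, $S_1 = \sum_i a_i$ counts, across all $k$ columns, the number of non-$r^*$ rows that match $r^*$'s entry in that column. Since each symbol occurs in exactly $\lam n$ rows of each column, each column contributes $\lam n - 1$, giving $S_1 = k(\lam n - 1)$. Next, $S^* = \sum_i \binom{a_i}{2}$ counts, across all $\binom{k}{2}$ unordered pairs of columns, the number of non-$r^*$ rows that agree with $r^*$ in both chosen columns. Since every ordered pair of symbols occurs in exactly $\lam$ rows of any two columns, each pair of columns contributes $\lam - 1$, giving $S^* = \binom{k}{2}(\lam - 1)$. Applying (\ref{Sstar}) then yields $S_2 = k(k-1)(\lam - 1) + k(\lam n - 1)$.

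Finally, I would substitute these expressions into (\ref{eq7}) to obtain
\[(\lam n^2 - 1)\bigl(k(k-1)(\lam-1) + k(\lam n - 1)\bigr) \geq k^2(\lam n - 1)^2,\]
and then rearrange to isolate $\lam$. The main obstacle is the algebraic simplification, but it should be much less painful than the corresponding step for Fisher's inequality in Section \ref{Fisher.sec}, because the only identities governing an OA (one balance per column, one per pair of columns) have already been absorbed into the definitions of $S_1$ and $S^*$. I expect the rearranged inequality to factor as a manifestly nonnegative prefactor times the quantity $\lam n^2 - k(n-1) - 1$, so that the bound $\lam \geq (k(n-1)+1)/n^2$ falls out with no further trickery.
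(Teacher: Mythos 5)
Your proposal is correct and is essentially identical to the paper's proof: the paper simply relabels the symbols in each column so that the reference row becomes $1\,1\cdots 1$ and counts ``1''s per row, which is the same as your agreement count with a fixed row $r^*$, and it arrives at the same values $S_0 = \lam n^2-1$, $S_1 = k(\lam n -1)$, $S^* = \binom{k}{2}(\lam-1)$ before applying (\ref{eq7}). Your anticipated factorization also pans out: the inequality reduces to $\lam(n-1)\bigl(\lam n^2 - 1 - k(n-1)\bigr) \geq 0$, giving the bound directly.
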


\begin{proof}
Relabel the symbols in each column of the OA so the last row of $A$ is $1 \: 1\: \cdots \: 1$.
For $1 \leq i \leq \lam  n^2-1$, let $a_i$ denote the number of
``$1$''s in row $i$ of $A$.
Then we have the following:
\begin{align*}
S_0 &= \lam n^2-1\\
S_1 &= k(\lam n - 1)\\
S^* &= \binom{k}{2}(\lam - 1)\\
S_2 &= k(k-1)(\lam - 1) + k(\lam n - 1) \quad \text{from (\ref{Sstar})}\\
&= k(k(\lam - 1) + \lam(n-1)).
\end{align*}
Applying the inequality (\ref{eq7}), we obtain 
\[ (\lam n^2-1)(k(k(\lam - 1) + \lam(n-1))) \geq (k(\lam n - 1))^2.\]

Therefore, 
\[ k (\lam n - 1)^2 \leq (\lam n^2 - 1)(k(\lam - 1) + \lam(n-1)).\]
This simplifies to yield
\[k \leq \frac{\lam n^2 - 1}{n-1}.\]
Equivalently, 
\[\lam \geq \frac{k(n-1)+1}{n^2}.\]
\end{proof}

The following result is proven in \cite{CSV,St-J269} using a very similar method. 

\begin{theorem}
\label{repeated.thm}
Let $k \geq 2$, $n \geq 2$ and $\lam \geq 1$ be integers.
If there is an $\OA_{\lam}(k,n)$ containing a row that is repeated $m$ times, then
\[\lam \geq \frac{m(k(n-1)+1)}{n^2}.\]
\end{theorem}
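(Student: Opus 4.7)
The plan is to mimic the proof of the Plackett--Burman Bound (Theorem~\ref{PB.thm}), but instead of singling out one row to serve as the all-ones row, we relabel the symbols in each column so that \emph{all $m$ copies} of the repeated row become $1\:1\:\cdots\:1$. We then run the variance argument on the remaining $\lam n^2 - m$ rows, letting $a_i$ denote the number of ``$1$''s in row $i$ for each such row.

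The counting of $S_0$, $S_1$, $S^*$, $S_2$ adapts directly from the proof of Theorem~\ref{PB.thm}, with the appropriate $m$-adjustments: each column contains $\lam n$ ones in total, of which $m$ lie in the removed rows, and any pair of columns contains the ordered pair $(1,1)$ exactly $\lam$ times, of which $m$ are again in the removed rows. This yields
\begin{align*}
S_0 &= \lam n^2 - m, & S_1 &= k(\lam n - m),\\
S^* &= \binom{k}{2}(\lam - m), & S_2 &= k(k-1)(\lam - m) + k(\lam n - m),
\end{align*}
and the condition $\lam \geq m$ needed for $S^* \geq 0$ is automatic, since the ordered pair $(1,1)$ occurs at least $m$ times in any two columns.

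Substituting into the fundamental inequality~(\ref{eq7}) and dividing through by $k$ gives
\[ (\lam n^2 - m)\bigl[\,k(\lam - m) + \lam(n-1)\,\bigr] \;\geq\; k(\lam n - m)^2.\]
The main step, and the only place that requires a real calculation, is to isolate $k$: collecting the $k$-terms on the right produces the bracketed expression $(\lam n - m)^2 - (\lam n^2 - m)(\lam - m)$, and I expect the crucial cancellation to eliminate the $\lam^2 n^2$ and $m^2$ terms, leaving $\lam m(n-1)^2$. Dividing both sides by $\lam(n-1)$ then reduces the bound to $\lam n^2 - m \geq mk(n-1)$, which rearranges to the claimed inequality. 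The main obstacle is confirming this $(n-1)^2$ factorization, but it is a direct generalization of the $m=1$ computation in Theorem~\ref{PB.thm}, and no genuine surprises are expected.
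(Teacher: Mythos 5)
Your proof is correct and is precisely the ``very similar method'' that the paper alludes to (the paper itself states Theorem~\ref{repeated.thm} without proof, deferring to \cite{CSV,St-J269}): the counts $S_0 = \lam n^2 - m$, $S_1 = k(\lam n - m)$, $S^* = \binom{k}{2}(\lam - m)$ are all right, and the crucial bracket does collapse as you predicted, $(\lam n - m)^2 - (\lam n^2 - m)(\lam - m) = \lam m(n-1)^2$. Dividing by $\lam(n-1) > 0$ then gives $\lam n^2 - m \geq km(n-1)$, which is the stated bound.
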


\section{The Second Johnson Bound}
\label{John.sec}

 The Second Johnson Bound (from 1962) for constant-weight binary codes (see \cite[Theorem 3]{Johnson}) is also proven using the variance method. 
As was done in \cite{Johnson}, we first prove a result that applies to certain $0$-$1$ matrices. 

\begin{theorem}
\label{SJ.thm}
Suppose $A = (a_{i,j})$ is an $m$ by $n$ $0$-$1$ matrix such that
\begin{enumerate}
\item every row of $A$ has weight $r$
\item the inner product of any two rows of $A$ is at most $\lambda$, where $r > \lambda$.
\end{enumerate}
Then
\begin{equation}
\label{SJ.eq} m \leq \frac{n(r - \lam)}{r^2 - n \lam},
\end{equation}
provided that ${r^2 - n \lam} > 0$.
\end{theorem}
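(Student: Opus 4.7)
The plan is to apply Theorem \ref{ineq1} to the column weights of $A$, rather than the row intersections (as was done for Fisher's Inequality). Let $a_j$ denote the weight of column $j$ of $A$, for $1 \leq j \leq n$. Then $S_0 = n$, and counting the $1$'s of $A$ by rows versus by columns gives $S_1 = \sum_{j=1}^n a_j = mr$.

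The key combinatorial input is hypothesis 2, which enters through $S^*$. Observe that $\binom{a_j}{2}$ counts the pairs of rows of $A$ that both contain a $1$ in column $j$, so
\[ S^* = \sum_{j=1}^n \binom{a_j}{2} = \sum_{\{i,i'\}} \langle \text{row } i, \text{row } i' \rangle \leq \binom{m}{2} \lambda,\]
where the middle sum is over unordered pairs of distinct rows. Using (\ref{Sstar}), this yields the \emph{upper} bound $S_2 \leq m(m-1)\lam + mr$. Note that in contrast to the applications in Sections \ref{Fisher.sec} and \ref{PB.sec}, here we only have an inequality for $S^*$, not an equation; but this goes in exactly the right direction to combine with Theorem \ref{ineq1}.

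I would then chain this upper bound against the lower bound $S_2 \geq S_1^2 / S_0$ from Theorem \ref{ineq1}, giving
\[ \frac{(mr)^2}{n} \leq m(m-1)\lam + mr.\]
Dividing through by $m$, multiplying by $n$, and collecting the terms involving $m$ on one side yields $m(r^2 - n\lam) \leq n(r - \lam)$. Since we assume $r^2 - n\lam > 0$, dividing by this quantity preserves the inequality and produces (\ref{SJ.eq}).

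There is no real obstacle: the only subtle point is noticing that the variance inequality (\ref{eq7}) gives a lower bound on $S_2$ while the pairwise inner product condition gives an upper bound on $S_2$, and the two squeeze together to yield the result. This is why there is no need to invoke the refined version in Theorem \ref{thm1}, which was designed for the opposite situation of an inequality on $S_1$.
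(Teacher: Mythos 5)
Your proposal is correct and follows essentially the same route as the paper: column weights as the $a_i$'s, $S_0 = n$, $S_1 = mr$, the double count of pairs of rows sharing a $1$ in a column to get $S^* \leq \binom{m}{2}\lam$, and then inequality (\ref{eq7}). The paper writes the slack explicitly as $S^* = \binom{m}{2}\lam - \epsilon/2$ and then discards $\epsilon$, which is only a cosmetic difference from your direct use of the upper bound on $S_2$.
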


\begin{proof}
For $1 \leq i \leq n$, define $a_i$ to be the weight of column $i$ of $A$.
It is clear that 
\begin{align}
\nonumber
S_0 &= n\\
\label{eq24}
S_1 &= mr.
\end{align} To compute $S_2$, we count the number of triples 
in the set \[ \T = \{(i,j,k): j < k,  a_{j,i} = a_{k,i} = 1\}.\]
For a fixed  value of $i$, there are $\binom{a_i}{2} $ triples $(i,j,k) \in \T$; hence
\begin{equation}
\label{eq25} | \T | = \sum_{i = 1}^{n} \binom{a_i}{2} .
\end{equation}
On the other hand, if we fix $j$ and $k > j$, there are at most $\lam$ triples $(i,j,k) \in \T$. Hence, 
\begin{equation}
\label{eq26} | \T | \leq \binom{m}{2} \lam.
\end{equation}
Combining (\ref{eq25}) and (\ref{eq26}), we obtain
\begin{equation}
\label{eq27} S^* = \sum_{i = 1}^{n} \binom{a_i}{2} \leq \binom{m}{2} \lam.
\end{equation}
We convert  (\ref{eq27}) to an equality by writing
\begin{equation}
\label{eq28} S^* = \binom{m}{2} \lam - \frac{\epsilon}{2},
\end{equation}
where $\epsilon \geq 0$. Here it will turn out that we can ignore $\epsilon$ without doing any extra work.

\medskip

From (\ref{eq28}) and (\ref{eq24}), 
we have
\begin{equation*}
S_2 =  m(m-1) \lam + mr - \epsilon.
\end{equation*}
As usual, we now apply (\ref{eq7}).
We have
\[ n(m(m-1) \lam + mr - \epsilon) \geq (mr)^2.\]
Because $\epsilon \geq 0$, we get
\begin{align*} n(m(m-1) \lam + mr ) &\geq (mr)^2\\
n((m-1) \lam + r ) &\geq mr^2\\
m(r^2 - n \lam) & \leq n(r - \lam)\\
m &\leq \frac{n(r - \lam)}{r^2 - n \lam} \quad \text{provided that ${r^2 - n \lam} > 0$}.
\end{align*}
This completes the proof.
\end{proof}

An \emph{$[n,r,d]$ constant weight binary code} is a set $\C$ of binary vectors (codewords), each having weight $r$, such that the distance between any two distinct codewords is at least $d > 0$. If the inner product of two distinct codewords is $\lam$, then the distance between the two codewords is $d = 2(r - \lam)$. Since this quantity is even, we typically write $d = 2 \delta$,
 where $\delta = r - \lambda > 0$ is an integer.  We now apply Theorem \ref{SJ.thm}.

\begin{corollary}
\label{John.cor}
 Suppose that $\C$ is an $[n,r,d]$ constant weight binary code with $r > \delta = d/2 > 0$.
Then 
\[ | \C | \leq \frac{n\delta}{r^2 - n (r - \delta)},
\]
provided that $r^2 - n (r - \delta) > 0$.
\end{corollary}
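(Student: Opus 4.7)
The plan is to recognize the corollary as a direct specialization of Theorem \ref{SJ.thm}, so the proof amounts to packaging the code $\C$ as a $0$-$1$ matrix of the right type. First I would form the $m \times n$ matrix $A$ whose rows are the codewords of $\C$, where $m = |\C|$. By the definition of an $[n,r,d]$ constant weight code, every row of $A$ has weight exactly $r$, which verifies condition (1) of Theorem \ref{SJ.thm}.

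Next I would translate the distance condition into an inner product condition. For any two distinct codewords $x,y \in \C$, both having weight $r$, the standard identity $d_H(x,y) = |x| + |y| - 2 \langle x,y \rangle$ gives $\langle x,y \rangle = r - d_H(x,y)/2$. Since $d_H(x,y) \geq d = 2\delta$, this yields $\langle x,y \rangle \leq r - \delta$. Hence setting $\lam = r - \delta$ satisfies condition (2) of Theorem \ref{SJ.thm}, and the hypothesis $r > \lam$ becomes $\delta > 0$, which holds by assumption.

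Finally I would substitute $\lam = r - \delta$ into the bound (\ref{SJ.eq}), noting that $r - \lam = \delta$ and $r^2 - n\lam = r^2 - n(r-\delta) > 0$ by hypothesis, to obtain
\[ |\C| = m \leq \frac{n(r - \lam)}{r^2 - n \lam} = \frac{n \delta}{r^2 - n(r-\delta)},\]
which is exactly the claimed inequality. There is no real obstacle here; the only thing to be careful about is the correct identification of $\lam$ with $r - \delta$ (rather than with $\delta$) when converting between the distance formulation used for codes and the inner-product formulation used in Theorem \ref{SJ.thm}.
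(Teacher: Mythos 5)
Your proposal is correct and matches the paper's proof: the paper likewise derives the inner-product bound $\langle x,y\rangle \le r-\delta$ from the distance identity $d = 2(r-\lambda)$ (stated just before the corollary) and then applies Theorem \ref{SJ.thm} with $\lambda = r-\delta$. Your write-up simply spells out the translation between distance and inner product in slightly more detail.
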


\begin{proof}
Apply the inequality (\ref{SJ.eq}) with $\lambda = r - \delta$. 
Then
\[ | \C | \leq \frac{n(r - (r - \delta))}{r^2 - n (r - \delta)} = \frac{n\delta}{r^2 - n (r - \delta)}.\]
\end{proof}

\begin{remark}
It is interesting to observe that the Plackett-Burman Bound can be derived easily as a corollary of the Second Johnson Bound.
See \cite[\S 3.1]{St-J269}.
\end{remark}

\section{The Stanton-Kalbfleisch Bound}
\label{SK.sec}

In a \emph{pairwise balanced design} (or \emph{PBD}), we have a collection of blocks $\A$ (each having size at least two) in which every pair of points from a given set $X$ occurs in a unique block. Unlike a BIBD, the blocks in a PBD may be of {different sizes}. We now state and prove a well-known bound on the number of blocks in a  PBD that contains at least one block of a specified size $k$. This bound was proven by Stanton and Kalbfleisch in 1972.

\begin{theorem}[Stanton-Kalbfleisch Bound, \cite{SK}]
\label{SKbound.thm}
Let $k$ and $v$ be integers such that 
$2 \leq k < v$. Suppose there is a
PBD on $v$ points in which there exists a 
block containing exactly $k$ points.
Then
\begin{equation}
\label{SK.eq} b \geq 1 + \frac{k^2(v-k)}{v-1}.\end{equation}
\end{theorem}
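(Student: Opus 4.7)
The plan is to apply Theorem \ref{ineq1} to the ``external sizes'' of the blocks that meet the distinguished block of size $k$. Write $B$ for that block. The crucial structural fact is that any other block $A \neq B$ satisfies $|A \cap B| \leq 1$, since two distinct blocks of a PBD cannot share a pair of points.

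I would let $L$ be the collection of blocks $A \neq B$ with $|A \cap B| = 1$, set $m = |L|$, and observe that $b \geq m+1$. For each $A \in L$ define $a_A = |A \setminus B| = |A| - 1 \geq 1$, and apply the variance framework to these $m$ numbers. Then $S_0 = m$. To compute $S_1 = \sum_{A \in L} |A \setminus B|$, I would double count pairs $(y,A)$ with $y \in A \setminus B$: for each $y \notin B$ and each $x \in B$, the unique block through $\{x,y\}$ meets $B$ in exactly $\{x\}$ and hence lies in $L$, and different $x$ give different blocks; so each $y \notin B$ lies in exactly $k$ blocks of $L$, giving $S_1 = k(v-k)$.

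Next, $S^* = \sum_{A \in L} \binom{|A \setminus B|}{2}$ counts pairs inside $X \setminus B$ together with blocks of $L$ containing them; each such pair lies in at most one block of $L$ (the unique block covering it in the PBD, if that block happens to meet $B$), so $S^* \leq \binom{v-k}{2}$. Hence $S_2 = 2S^* + S_1 \leq (v-k)(v-k-1) + k(v-k) = (v-k)(v-1)$. Plugging into Theorem \ref{ineq1} gives $m(v-k)(v-1) \geq S_0 S_2 \geq S_1^2 = k^2(v-k)^2$; dividing by $v-k > 0$ yields $m \geq k^2(v-k)/(v-1)$, and then $b \geq m+1$ finishes the argument.

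The main obstacle is choosing the right family of numbers: the naive choice $a_A = |A \cap B|$ gives only $a_A \in \{0,1\}$ and loses too much information, whereas the correct choice $|A \setminus B|$, restricted to blocks that actually meet $B$, captures the real constraint. A minor subtlety is that $S^*$ is only bounded above here rather than known exactly; this does not quite match the setting of Theorem \ref{thm1}, but it combines cleanly with Theorem \ref{ineq1}, because an upper bound on $S_2$ together with $S_0 S_2 \geq S_1^2$ still yields the desired lower bound on $S_0$.
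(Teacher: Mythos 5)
Your proof is correct, and it reaches the bound by a genuinely different route than the paper's. The paper deletes the points of the block of size $k$ from \emph{all} $b-1$ remaining blocks and takes $a_i = |A_i \setminus A_b|$ for every one of them: with that choice $S^* = \binom{v-k}{2}$ holds exactly (every pair of points outside $A_b$ survives in exactly one residual block), but $S_1$ is only bounded \emph{below} by $k(v-k)$, because a point outside $A_b$ may also lie in blocks disjoint from $A_b$. A lower bound on $S_1$ cannot be inserted directly into $S_0 S_2 - S_1^2 \geq 0$, since the slack enters $S_2 = 2S^* + S_1$ as well; this is precisely what forces the paper to invoke the monotonicity argument packaged as Theorem \ref{thm1}. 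You instead keep only the $m$ blocks that meet the distinguished block, which makes $S_1 = k(v-k)$ exact (each of the $v-k$ outside points lies in exactly $k$ such blocks, one per point of the distinguished block, by the bijection you describe) and moves all the slack into the upper bound $S^* \leq \binom{v-k}{2}$; an upper bound on $S_2$ combines with Theorem \ref{ineq1} with no further work, exactly as you observe. The only cost is the final step $b \geq m+1$, which discards the blocks disjoint from the distinguished block --- harmless, since the goal is a lower bound on $b$. Both arguments yield the same inequality and the same equality analysis (equality forces every other block to meet the block of size $k$); yours has the advantage of dispensing with Theorem \ref{thm1} entirely.
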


\begin{proof}
Suppose that $(X,\A)$ is a PBD 
with $|X| = v$ such that $A \in  \A$ is a block containing exactly $k$ points.
Denote the blocks in $\A$ by $A_1, \dots , A_b$, where $A_b = A$.

Now construct a set system $(Y, \B)$ by deleting
all the points in the block $A_b$ as follows:
\begin{align*}
Y &= X \backslash A_b, \\
B_i &= A_i \backslash A_b, 1 \leq i \leq b-1, \quad \mbox{and} \\
\B &= \{ B_i : 1 \leq i \leq b-1 \}.
\end{align*}
$(Y,\B)$ is a set system with $v-k$ points and $b-1$ blocks
in which every pair of points occurs in a unique block.
(This set system may contain blocks of size one, in which case it would technically not be 
a PBD.)

For $1 \leq i \leq b-1$, denote $a_i = |B_i|$.
Note that $a_i = |A_i|$ or $a_i = |A_i| - 1$ for $1 \leq i \leq b-1$.
Furthermore, $a_i = |A_i| - 1$ if and only if $A_i$ intersects
$A_b$ in a point.

Denote the points in $Y$ by $y_j$, $1 \leq j \leq v-k$.
For $1 \leq j \leq v-k$, define 
\[r_j =
| \{ B_i \in \B : y_j \in B_i\} | .\]
Then it is clear that
\begin{equation}
\label{KS.eq0}
\sum_{i=1}^{b-1} a_i = \sum_{j=1}^{v-k} r_j.
\end{equation}

Now, in the pairwise balanced design
$(X,\A)$, every point $y_j$ must occur in a unique block with
each of the points in $A_b$. Hence $r_j \geq k$ for all $j$,
$1 \leq j \leq v-k$. Substituting into (\ref{KS.eq0}), it follows
that 
\begin{equation*}
S_1 = \sum _{i=1}^{b-1} a_i \geq k(v-k).
\end{equation*}
So we have a lower bound on $S_1$, which is the scenario that we discussed in Theorem \ref{thm1}.

\medskip

Every pair of points in $Y$ occurs in exactly one of the $B_i$'s,
so it follows that
\begin{equation*}
S^* = \sum _{i=1}^{b-1} \binom{a_i}{2} = \binom{v-k}{2}.
\end{equation*}
Thus we obtain the following three equations involving $a_1, \dots , a_{b-1}$:
\begin{align*}
S_0 &= b-1\\
S_1 &= k(v-k) + \epsilon\\
S_2 &= (v-k)(v-k-1) + k(v-k) + \epsilon\\
& = (v-k)(v-1) + \epsilon,
\end{align*}
where $\epsilon \geq 0$.

We can apply equation (\ref{eq7a}) from Theorem \ref{thm1}.
It follows that
\[ (b-1) (v-k)(v-1) \geq (k(v-k))^2.\]
This simplifies to yield the inequality (\ref{SK.eq}).
 \end{proof}
 
If equality occurs in the Stanton-Kalbfleisch Bound, then every  block (other than the block of size $k$)  intersects the block of size $k$ in a point, and every such block has exactly 
\[ 1 + \frac{k(v-k)}{b-1} = 1+ \frac{k(v-k)(v-1)}{k^2(v-k)} = 1+ \frac{v-1}{k}\]
points. 

\medskip

\begin{remark}
The Stanton-Kalbfleisch Bound, as presented in \cite{SK}, is more general than the result given in Theorem \ref{SKbound.thm}. The general bound (which we have not stated) actually applies to $\mu$-wise balanced designs for any $\mu \geq 2$. 
Here, we are only discussing the special case $\mu = 2$.
\end{remark}

 
 
 The classic  Erd\H{o}s-de Bruijn Theorem (see \cite{DB-E}) states that $b \geq v$ in a PBD, and $b = v$ if and only if the PBD is a projective plane or a near-pencil. Ralph Stanton observed in his papers (e.g., \cite{SEvRC}) that the  Erd\H{o}s-de Bruijn Theorem is a corollary of the Stanton-Kalbfleisch Bound. 
 The Stanton-Kalbfleisch Bound asserts  that $b \geq 1 + \frac{k^2(v-k)}{v-1}$.
Using this fact, we compute the following:
 \begin{align}
 b - v &\geq 1 + \frac{k^2(v-k)}{v-1} - v \nonumber \\
 & = \frac{k^2(v-k)- (v-1)^2}{v-1}\nonumber \\
 &= \frac{-v^2 + v(k^2 +2) - (k^3+1)}{v-1}\nonumber \\
 &= \frac{-(v - (k + 1))(v - (k^2-k+1))}{v-1}\label{dBE.eq}.
 \end{align}
 Now suppose that $k+1 \leq v \leq k^2-k+1$. 
Then
\[(v - (k + 1))(v - (k^2-k+1)) \leq 0.\]
 From the  inequality (\ref{dBE.eq}), we see that $b \geq v$.

 If $b = v$, then one of the following two  situations occurs:
\begin{enumerate}
\item
$v = k^2-k+1$, every  block  intersects the block of size $k$ in a point and every block has exactly 
$1 + \frac{v-1}{k} = k$ points. This PBD is therefore a projective plane of order $k-1$.
\item $v = k+1$, every  block  intersects the block of size $k$ in a point, and every block (other than the block of size $k$) has size $1 + \frac{v-1}{k} = 2$. This PBD is a near-pencil.
\end{enumerate}

\section{The Mullin-Vanstone Bound and an Application}
\label{MV.sec}

An \emph{$(r,\lambda)$-design} is a PBD of index $\lambda$ (i.e., every pair of points occurs in exactly $\lambda $ blocks) where every point occurs in exactly $r$ blocks.
However, unlike a ``normal'' PBD, blocks of size $1$ are allowed. We next discuss the Mullin-Vanstone Bound \cite{MV}, which was proven in 1975.

\begin{theorem}[Mullin and Vanstone, \cite{MV}]
\label{MVbound.thm}
In any $(r,\lambda)$-design on $v$ points and $b$ blocks, it holds that
 \[ b \geq \frac{r^2v}{r + \lambda(v-1)}.\]
\end{theorem}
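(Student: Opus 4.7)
The plan is to apply Theorem \ref{ineq1} directly, where the $a_i$'s are taken to be the block sizes themselves, rather than the intersection sizes with a distinguished block (as was done for Fisher's Inequality). Let the blocks of the $(r,\lam)$-design be $A_1, \dots, A_b$, and set $a_i = |A_i|$ for $1 \leq i \leq b$. The advantage of this choice is that the relevant sums follow directly from the two defining parameters of the design, without having to fix a block or delete points.

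Next I would compute $S_0$, $S_1$, and $S^*$ by elementary double counting. Clearly $S_0 = b$. For $S_1 = \sum_i |A_i|$, counting incidences $(x, A_i)$ with $x \in A_i$ in two ways yields $S_1 = rv$, because each of the $v$ points lies in exactly $r$ blocks. For $S^* = \sum_i \binom{|A_i|}{2}$, counting triples $(\{x,y\}, A_i)$ with $\{x,y\} \subseteq A_i$ in two ways yields $S^* = \lam \binom{v}{2}$, because each pair of points lies in exactly $\lam$ blocks. Then equation (\ref{Sstar}) gives $S_2 = 2S^* + S_1 = \lam v(v-1) + rv = v(r + \lam(v-1))$.

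Finally I would substitute these expressions into (\ref{eq7}) to obtain
\[
b \cdot v(r + \lam(v-1)) \geq (rv)^2,
\]
and divide by the positive factor $v(r + \lam(v-1))$ to conclude
\[
b \geq \frac{r^2 v}{r + \lam(v-1)}.
\]
I do not anticipate a genuine obstacle: the proof is essentially immediate once one recognizes that the block sizes themselves provide the right sequence to plug into Theorem \ref{ineq1}. Unlike the Stanton-Kalbfleisch setting, there is no inequality to convert into an equality, so Theorem \ref{thm1} is not needed here; the basic form $S_0 S_2 \geq S_1^2$ suffices.
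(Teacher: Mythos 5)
Your proposal is correct and follows essentially the same route as the paper: the paper also (implicitly) takes the $a_i$ to be the block sizes, records $S_0 = b$, $S_1 = vr$, $S^* = \lambda\binom{v}{2}$, $S_2 = v(r+\lambda(v-1))$, and applies the inequality (\ref{eq7}) directly. Your double-counting justifications for $S_1$ and $S^*$ simply fill in details the paper calls ``easily derived.''
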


The proof follows immediately from  (\ref{eq7}) and the following easily derived equations:
\begin{align*}
S_0 &= b\\
S_1 &= vr\\
S^* &= \lambda \textstyle{\binom{v}{2}}\\
S_2 & = v(r + \lambda(v-1)).
\end{align*}


In 2011, Douglas West asked me for a bound on the largest value of $s$ such that there exists $s$ points and $s$ {nonincident} blocks in a projective plane of order $q$.
This question can be answered using the  following  corollary of the Mullin-Vanstone Bound. 

\begin{corollary} [Stinson \cite{St-J238}]
\label{t1}
If there exists a nonincident set of $s$ points and $t$ lines in a projective
plane of order $q$, then 
\begin{equation}
\label{var-bound}
t \leq \frac{q^3+q^2+q-qs}{q+s}.
\end{equation}
\end{corollary}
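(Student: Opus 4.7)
The plan is to apply Theorem~\ref{MVbound.thm} to a derived incidence structure built on the $s$ distinguished points. Let $P$ be the nonincident set of $s$ points and $L$ the set of $t$ lines, so that by hypothesis no line in $L$ passes through any point of $P$. I would take the point set of the new design to be $Y = P$ (so $v = s$) and, for each line $\ell$ of the plane with $\ell \notin L$ and $\ell \cap P \neq \emptyset$, declare $\ell \cap P$ to be a block (counted with multiplicity if two distinct lines happen to yield the same subset of $P$).

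The key step is verifying that this is a genuine $(r,\lambda)$-design with $r = q+1$ and $\lambda = 1$. For any $p \in P$, the $q+1$ lines of the plane through $p$ cannot belong to $L$ (otherwise $p$ would be incident with a line of $L$), so $p$ lies in exactly $q+1$ blocks, giving $r = q+1$. For any two distinct points $p_1,p_2 \in P$, the unique line of the plane through them likewise lies outside $L$, and since two lines of a projective plane meet in at most one point, no other line can contribute a block containing both; hence $\lambda = 1$.

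To assemble the bound, note that since each block arises from a distinct line outside $L$, the number of blocks $b$ satisfies $b \leq q^2+q+1-t$. Theorem~\ref{MVbound.thm} gives
\[
b \;\geq\; \frac{(q+1)^2 s}{(q+1)+(s-1)} \;=\; \frac{(q+1)^2 s}{q+s}.
\]
Combining these two inequalities and isolating $t$, I would write
\[
t \;\leq\; (q^2+q+1) - \frac{(q+1)^2 s}{q+s} \;=\; \frac{(q^2+q+1)(q+s)-(q+1)^2 s}{q+s},
\]
and the numerator expands and collapses routinely to $q^3+q^2+q-qs$, which yields the claimed bound.

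The only creative step is spotting the right derived design; once $P$ is chosen as the point set and the lines outside $L$ as the blocks, the parameters $(r,\lambda) = (q+1,1)$ drop out immediately from the projective plane axioms, and the Mullin--Vanstone bound together with a short algebraic simplification finishes the argument.
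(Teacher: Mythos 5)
Your proof is correct and follows essentially the same route as the paper: both induce an $(q+1,1)$-design on the $s$ points by intersecting the lines of the plane with that point set, apply the Mullin--Vanstone bound to get $b \geq (q+1)^2 s/(q+s)$, and combine with $b \leq q^2+q+1-t$. Your write-up is in fact slightly more careful than the paper's, since you explicitly verify the parameters $(r,\lambda)=(q+1,1)$ of the induced design and note the multiset convention for repeated blocks.
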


\begin{proof}
A projective plane of order $q$ is a $(q+1,1)$-design on $q^2+q+1$ points. Suppose we choose any subset $Y$ of $s$ points in this projective plane. If we intersect each line (block) with $Y$, then we obtain a $(q+1,1)$-design on $s$ points.
Applying Theorem \ref{MVbound.thm}, we see that the number of blocks $b$ in this induced design satisfies the inequality
\[ b \geq \frac{(q+1)^2s}{q+s}.\] Hence, if there are $t$  blocks in the projective plane that are disjoint from $Y$, then
\[ t \leq q^2 + q+1 - \frac{(q+1)^2s}{q+s} = \frac{q^3+q^2+q-qs}{q+s}.\]
\end{proof}

West's question concerned the special case where $t = s$ (this corresponds to an $s$ by $s$ submatrix of 0's in the incidence matrix of the projective plane). 
If we set $s = t$ in (\ref{var-bound}), then it follows that 
\begin{equation}
\label{West.eq}
 s \leq  1 + (q+ 1)(\sqrt{q} - 1).
 \end{equation}
It is  shown in \cite{St-J238} that is possible to attain {equality} in (\ref{West.eq}) if and only if the $s$ points comprise a maximal $(s,\sqrt{q})$-arc in the projective plane. Such maximal arcs exist when 
$q$ is an even power of two; they are known as Denniston arcs.

\section{Two-point Sampling}
\label{twopoint.sec}

Chor and Goldreich \cite{CG} and (independently)  Spencer \cite{Sp} introduced the \emph{two-point sampling} derandomization technique and computed error probabilities of the respective generators.
In 1996, Gopalakrishan and Stinson \cite{GS} observed that these two generators could be unified and generalized by describing them in terms of {orthogonal arrays}, and the error analysis given in \cite{CG,Sp} can be improved using the variance technique.

Here is the setting for two-point sampling. Suppose we use an $\OA_1(k,n)$, say $A$, for derandomization. There are $n^2$ rows in $A$, so a random row of $A$ can be chosen using $2 \log_2 n$ random bits. Once a random row is chosen, we  run a Monte Carlo algorithm, say $\Alg$, $k$ times, using the $k$ elements in the chosen row as randomness for the algorithm $\Alg$. Thus the number of random bits  is reduced from  $k \log_2 n$ (which is what would be required for $k$ random choices) to $2 \log_2 n$. However, the error probability is increased. 

We present the analysis of the error probability of two-point sampling given in \cite{GS}. 
Suppose for the purposes of discussion that the algorithm $\Alg$ is yes-biased with  error probability $\epsilon$. If $I$ is a 
a no-instance, then $\Alg(I)$ always returns ``no.'' However, if $I$ is a 
a yes-instance, then $\Alg(I)$ can erroneously return ``no'' with probability  $\epsilon$. More precisely, for a yes-instance $I$, the number of values $x$ such that  $\Alg(I,x)$ returns ``no'' is  $\epsilon n$. We will call these  \emph{bad points} and denote the set of bad points by $Y$. Points that are not bad are \emph{good points}; denote the set of good points by $Z$. Let $|Z| = z$; then $z = n(1 - \epsilon)$.

For a given instance $I$, we choose a row $r$ of the OA $A$.
Suppose the row $r$ contains the symbols $x_1, \dots , x_k$. Then we run the algorithm  $\Alg(I,x_i)$ for $1 \leq i \leq k$ (so we run the algorithm $k$ times, with randomness specified by the points in row $r$ of $A$). For a yes-instance $I$, we want an upper bound on the probability that $x_1, \dots , x_k$ are all bad points.

For a given row $i$,  define
 $b_i = | \{ j : a_{i,j} \in  Z \} |$ (i.e., $b_i$ is the number of good points in row $i$ of $A$).
Suppose there are $N$ rows with $b_i \geq 1$,  say (without loss of generality) rows $1, \dots , N$.
These are the rows that would allow us to  conclude correctly that $I$ is a yes-instance.
We have the following equations involving the values $b_1, \dots , b_{N}$.
\begin{align*}
S_0 &= N\\
S_1 &= knz\\
S^* &= \binom{k}{2} z^2\\
S_2 &= k(k-1) z^2 + knz = kz((k-1)z+n).
\end{align*} 
Now we apply (\ref{eq7}):
\begin{align*}
 Nkz((k-1)z+n) &\geq (knz)^2\\
N((k-1)z+n) &\geq n^2kz\\
N &\geq \frac{n^2kz}{(k-1)z+n}.
\end{align*}
The number of bad rows is 
\begin{align*}
n^2 - N & \leq n^2 - \frac{n^2kz}{(k-1)z+n}\\
&= n^2 \left( 1 - \frac{kz}{(k-1)z+n} \right).
\end{align*}
Hence the error probability of two-point sampling is at most
\[ 1 - \frac{kz}{(k-1)z+n} .\]
However, $z = n(1 - \epsilon)$. Hence, 
the error probability is at most
\begin{align*}
 1 - \frac{kn(1 - \epsilon)}{(k-1)n(1 - \epsilon)+n} &= 1 - \frac{k(1 - \epsilon)}{(k-1)(1 - \epsilon)+1}\\
 &= \frac{\epsilon}{(k-1)(1 - \epsilon)+1}.
 \end{align*}
 
 \section{An Extension of the Variance Method}
 \label{exten.sec}
 
 So far,  we have discussed the simplest version of the variance method. Extensions are possible, some of which lead to strengthened bounds. Recall that the fundamental idea of the variance method is that a sum of the form 
 \[ \sum_{i=1}^n (a_i - \overline{a})^2 \] non-negative for any set of real numbers $a_1, \dots , a_n$.
 A related approach is based on observation that
\begin{equation}
\label{gen.eq}
 \sum_{i=1}^n (a_i - \ell)(a_i - \ell - 1) \geq 0
 \end{equation} provided that the $a_i$'s are all integers and $\ell$ is an integer. Further, equality holds in (\ref{gen.eq}) if and only if 
 $a_i \in \{ \ell, \ell+1\}$ for all $i$.
 In any given situation, it is usually straightforward to determine the integer $\ell$ that will lead to the strongest bound in (\ref{gen.eq}).
  We briefly discuss a couple of examples to illustrate.
 
 \smallskip
 
 This approach has been used by Stinson in \cite{St-J27} to improve the Stanton-Kalbfleisch Bound.
 
 \begin{theorem}[Stinson Bound,  \cite{St-J27}]
\label{Stbound.thm}
Let $k$ and $v$ be integers such that 
$2 \leq k < v$. Suppose there is a
PBD on $v$ points in which there exists a 
block containing exactly $k$ points. Then for any integer $\ell$, we have
\begin{equation}
\label{St.eq} b \geq 1 + \frac{(2 \ell k - v + k + 1)(v-k)}{\ell^2 + \ell}.\end{equation}
\end{theorem}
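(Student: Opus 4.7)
The plan is to mirror the proof of the Stanton-Kalbfleisch Bound (Theorem~\ref{SKbound.thm}), but to invoke the integer-strengthened inequality (\ref{gen.eq}) in place of the basic variance inequality. First I would carry over the set-system construction verbatim: starting from a PBD $(X,\A)$ containing a block $A_b$ of size $k$, delete the points of $A_b$ to form $(Y,\B)$ on $v-k$ points with $b-1$ blocks of sizes $a_1,\dots,a_{b-1}$. The same counting arguments used in Theorem~\ref{SKbound.thm} (every point of $Y$ lies in at least $k$ of the $B_i$'s, and every pair of points of $Y$ lies in exactly one $B_i$) give
\begin{align*}
S_0 &= b-1,\\
S_1 &= k(v-k) + \epsilon,\\
S_2 &= (v-k)(v-1) + \epsilon,
\end{align*}
for some real $\epsilon \geq 0$.

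The new ingredient is to apply (\ref{gen.eq}) with the given integer $\ell$. Since each $a_i$ is a non-negative integer, expanding $(a_i-\ell)(a_i-\ell-1) = a_i^2 - (2\ell+1)a_i + \ell(\ell+1)$ and summing yields
\[
0 \;\leq\; \sum_{i=1}^{b-1}(a_i-\ell)(a_i-\ell-1) \;=\; S_2 - (2\ell+1)S_1 + \ell(\ell+1)S_0,
\]
which rearranges to $(b-1)(\ell^2+\ell) \geq (2\ell+1)S_1 - S_2$. Substituting the three expressions above, the right-hand side simplifies to
\[
(2\ell+1)\bigl(k(v-k)+\epsilon\bigr) - \bigl((v-k)(v-1)+\epsilon\bigr) \;=\; (v-k)(2\ell k - v + k + 1) + 2\ell \epsilon.
\]

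For $\ell \geq 1$ the term $2\ell \epsilon$ is non-negative, so it may be discarded, and dividing through by $\ell^2+\ell > 0$ produces exactly (\ref{St.eq}). The main obstacle I anticipate is handling the $\epsilon$ term cleanly: unlike in Theorem~\ref{thm1}, where a quadratic monotonicity argument was needed, inequality~(\ref{gen.eq}) is only linear in $S_1$, and so the sign of the coefficient $2\ell$ determines whether $\epsilon$ can be dropped. This is why the theorem is really intended for positive integers $\ell$ (the cases $\ell \in \{0,-1\}$ make the denominator vanish, and for $\ell\leq -2$ the numerator becomes negative, so the bound is trivial). Beyond that sign check, the remaining work is the routine algebraic simplification needed to put the expression into the factored form appearing in (\ref{St.eq}), after which one chooses the positive integer $\ell$ maximizing the right-hand side to obtain the strongest improvement over Stanton-Kalbfleisch.
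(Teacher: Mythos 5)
Your proposal is correct and follows essentially the same route as the paper: delete the block of size $k$ as in the Stanton--Kalbfleisch proof, then apply (\ref{gen.eq}) with the substitution $S_2-(2\ell+1)S_1+(\ell^2+\ell)S_0\geq 0$ and the bounds $S_1\geq k(v-k)$, $S^*=\binom{v-k}{2}$. Your explicit tracking of the $2\ell\epsilon$ term and the observation that the sign of $\ell$ matters (the paper's inequality chain silently requires $\ell\geq 1$, and the bound is vacuous or undefined for $\ell\leq 0$) is in fact slightly more careful than the paper's own presentation.
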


\begin{proof}
We use the values of $S_0,S_1,S_2$ and $S^*$ derived in the proof of Theorem \ref{SKbound.thm}, along with (\ref{Sstar}) and (\ref{gen.eq}):
\begin{align*}
0 &\leq \sum_{i=1}^n (a_i - \ell)(a_i - \ell - 1)\\
&= S_2 - (2\ell+1)S_1 + (\ell^2 + \ell)S_0\\
&= 2S^* - 2 \ell S_1 + (\ell^2 + \ell)S_0\\
&\leq (v-k)(v-k-1) - 2 \ell k(v-k) + (\ell^2 + \ell) (b-1)\\
& = (v-k)(v-k-1 - 2 \ell k)+ (\ell^2 + \ell) (b-1)\\
b-1 & \geq \frac{(v-k)(2 \ell k- (v-k-1))}{\ell^2 + \ell}\\
b &\geq 1 + \frac{(2 \ell k - v + k + 1)(v-k)}{\ell^2 + \ell}.
\end{align*}
\end{proof}

The strongest application of Theorem \ref{Stbound.thm} is obtained by taking
\[ \ell = \left\lfloor \frac{v-1}{k} \right\rfloor.\]
For this particular choice of $\ell$, the lower bound given in Theorem \ref{Stbound.thm}  
is always greater than or equal to the Stanton-Kalbfleish Bound; see \cite{St-J27} for details.
 
  \smallskip

 A similar method is used by Johnson in 
\cite{Johnson} in order to provide an improvement of Theorem \ref{SJ.thm}. 
We use the values of $S_0,S_1,S_2$ and $S^*$ derived in the proof of Theorem \ref{SJ.thm}, along with 
(\ref{Sstar}) and (\ref{gen.eq}). The following is obtained:
\begin{align}
0 &\leq 2S^* - 2 \ell S_1 + (\ell^2 + \ell)S_0 \quad \text{as in the proof of Theorem \ref{Stbound.thm}}\nonumber\\
& \leq \lam m (m-1) - 2 \ell mr + (\ell^2 + \ell)n \nonumber\\
m(m-1)\lambda &\geq 2 \ell mr - (\ell^2 + \ell) n.\label{J2.eq}
\end{align}
Following Johnson \cite{Johnson}, we take 
$\ell = \lfloor \frac{mr}{n}\rfloor$ and we write 
\begin{equation}
\label{rem.eq}
mr = n\ell + t,
\end{equation} where $0 \leq t \leq n-1$.
For this value of $\ell$, the inequality (\ref{J2.eq}) can be rewritten as follows:
\begin{align*}
m(m-1)\lambda &\geq 2 \ell (n\ell + t) - (\ell^2 + \ell) n \quad \text{from (\ref{rem.eq})}\\
&= 2 \ell^2 n + 2 \ell t - \ell ^2 n - \ell n\\
&= \ell^2 n +  2 \ell t - \ell n\\
&= \ell^2 n +  2 \ell t - (mr-t) \quad \text{from (\ref{rem.eq})}\\
&= \ell^2 n +  2 \ell t + t - mr\\
&= \ell^2 n - \ell^2 t + \ell^2 t +  2 \ell t  + t - mr\\
&= (n-t)\ell^2  + t (\ell+1)^2 - mr.
\end{align*} We have proven the following.
 
 \begin{theorem}[\cite{Johnson}, eq.\ (6)]
Suppose $A = (a_{i,j})$ is an $m$ by $n$ $0$-$1$ matrix such that
\begin{enumerate}
\item every row of $A$ has weight $r$
\item the inner product of any two rows of $A$ is at most $\lambda$, where $r > \lambda$.
\end{enumerate}
Let $\ell = \lfloor \frac{mr}{n}\rfloor$ and write  $mr = n\ell + t$,
where $0 \leq t \leq n-1$.
Then
\begin{equation}
 \label{john.eq} 
 m(m-1)\lambda \geq (n-t)\ell^2  + t (\ell+1)^2 - mr.
\end{equation}
\end{theorem}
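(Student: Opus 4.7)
The plan is to reuse the column-weight setup of Theorem \ref{SJ.thm} and feed the same quantities $S_0, S_1, S^*$ into the extended variance inequality (\ref{gen.eq}) with a carefully chosen integer $\ell$. The natural choice is $\ell = \lfloor mr/n\rfloor$, because (\ref{gen.eq}) holds with equality precisely when every $a_i \in \{\ell, \ell+1\}$; given the constraints $\sum a_i = mr$ and $n$ column weights, this equality case has exactly $t$ columns of weight $\ell+1$ and $n - t$ columns of weight $\ell$ when $mr = n\ell + t$.

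First I would let $a_i$ be the weight of column $i$ of $A$, exactly as in the proof of Theorem \ref{SJ.thm}. This immediately gives $S_0 = n$, $S_1 = mr$, and $S^* \leq \binom{m}{2}\lambda$ (the last by double-counting triples $(i,j,k)$ with $a_{j,i} = a_{k,i} = 1$ and $j < k$). Expanding $(a_i - \ell)(a_i - \ell - 1) = a_i^2 - (2\ell+1)a_i + (\ell^2+\ell)$ and summing over $i$ yields
\[ 0 \leq S_2 - (2\ell+1)S_1 + (\ell^2+\ell)S_0. \]
Replacing $S_2$ by $2S^* + S_1$ and applying the bound on $S^*$ collapses this to
\[ \lambda m(m-1) \geq 2\ell m r - (\ell^2+\ell)n, \]
which is precisely (\ref{J2.eq}).

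The remaining work is purely algebraic: plug $mr = n\ell + t$ into the right-hand side and regroup so that the remainder $t$ gets absorbed into a perfect square. The bookkeeping is $2\ell(n\ell + t) - (\ell^2+\ell)n = \ell^2 n + 2\ell t - \ell n$; substituting $\ell n = mr - t$ turns this into $\ell^2 n + 2\ell t + t - mr$; and splitting $\ell^2 n = (n-t)\ell^2 + t\ell^2$ exposes the identity $t\ell^2 + 2\ell t + t = t(\ell+1)^2$, producing $(n-t)\ell^2 + t(\ell+1)^2 - mr$, as required by (\ref{john.eq}).

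The main (and only) subtlety is the justification of the specific choice $\ell = \lfloor mr/n\rfloor$: we pick $\ell$ so that the "ideal" equality-case values $\ell$ and $\ell+1$ bracket the mean $\bar a = mr/n$, which is exactly the condition that there exists $t \in \{0,1,\dots,n-1\}$ with $mr = n\ell + t$. Beyond this, there is no new combinatorial input; the improvement over Theorem \ref{SJ.thm} comes entirely from replacing the ordinary variance $\sum (a_i - \bar a)^2 \geq 0$ by the integer-valued inequality (\ref{gen.eq}), and I expect the entire argument to go through with roughly the line count of the displayed derivation preceding the theorem.
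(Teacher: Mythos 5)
Your proposal is correct and follows essentially the same route as the paper: the same column-weight setup from Theorem \ref{SJ.thm}, the same application of (\ref{gen.eq}) via $S_2 = 2S^* + S_1$ to reach (\ref{J2.eq}), and the same substitution $mr = n\ell + t$ with the regrouping $t\ell^2 + 2\ell t + t = t(\ell+1)^2$. No meaningful differences to report.
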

If we regard $n$, $r$ and $\lambda$ as being fixed, we would then compute the largest integer $m$ that satisfies
(\ref{john.eq}). Of course, this is just a necessary condition for the existence of the desired $0$-$1$ matrix.


\end{document}